\documentclass[%
  reprint,
  amsmath,amssymb,
  aps,
  prd,
]{revtex4-2}

\usepackage{graphicx}
\usepackage{dcolumn}
\usepackage{bm}
\usepackage{amsmath,amsthm,amssymb}
\usepackage{enumitem}

\def\be{\begin{equation}}
\def\ee{\end{equation}}

\def\ra{\rightarrow}

\newtheorem*{theorem*}{Theorem}
\newtheorem*{conj*}{Conjecture}

\theoremstyle{remark}
\newtheorem{rem}{Remark}
\newtheorem{lem}{Lemma}

\begin{document}

\title[Three self-similar solutions]{%
  Three self-similar solutions of Yang-Mills equations in high odd dimensions%
}

\author{Piotr Bizo\'n}
\affiliation{%
  Institute of Theoretical Physics,
  Jagiellonian University,
  Krak\'ow, Poland%
}
\email{piotr.bizon@uj.edu.pl}

\author{Irfan Glogi\'c}
\affiliation{%
  Fakult\"at f\"ur Mathematik,
  Universit\"at Bielefeld,
  Bielefeld, Germany%
}
\email{irfan.glogic@uni-bielefeld.de}

\author{Arthur Wasserman}
\affiliation{%
  Department of Mathematics,
  University of Michigan,
  Ann Arbor, Michigan, USA%
}
\email{awass@umich.edu}

\date{\today}

\begin{abstract}
We consider spherically symmetric Yang-Mills equations with gauge group
$SO(d)$ in $d+1$-dimensional Minkowski spacetime.
For any given odd $d\geq 11$, we establish existence and uniqueness
(modulo reflection symmetry) of exactly $N$ smooth self-similar solutions,
where $N$ is the number of zeros of an explicit polynomial $P_m(z)$
of degree $m=(d-5)/2$ in the interval $0<z<1$.
The number $N$ can be determined algorithmically by an explicit computation.
Our extensive computations for large odd dimensions suggest that $N=3$
for all odd $d\ge 11$.
Two of these self-similar solutions admit closed-form expressions:
one has been known previously, while the other appears to be new.
Our result points toward a relatively simple landscape of possible blowup
scenarios for high-dimensional Yang-Mills equations.
Beyond its purely mathematical interest, this rigidity of self-similar blowup
may also be relevant from a physical perspective, as it constrains the
possible ultraviolet dynamics of non-abelian gauge fields in higher-dimensional
Yang--Mills theories arising in string-inspired extra-dimensional setups
and in holographic models.
\end{abstract}

\maketitle

\section{Introduction}\label{sec:intro}
We consider Yang-Mills (YM) equations with gauge group $SO(d)$
in $d+1$-dimensional Minkowski spacetime
\begin{align}
\partial_{\alpha} F^{\alpha\beta}
  + [A_{\alpha},F^{\alpha\beta}] &= 0,
\label{ym}
\end{align}
where the YM potentials $A_{\alpha}$ (for $\alpha = 0,\dots,d$)
are skew-symmetric $d\times d$ matrices and the YM curvature is
\begin{align}
F_{\alpha\beta}
  &= \partial_{\alpha} A_{\beta}
     - \partial_{\beta} A_{\alpha}
     + [ A_{\alpha}, A_{\beta} ].
\end{align}
We assume the spherically symmetric magnetic ansatz \cite{D}
\begin{align}
A_{\alpha}^{ij}
  = (\delta_{\alpha}^i x^j - \delta_{\alpha}^j x^i)\, w(t,r),
\label{spherical}
\end{align}
where $i,j = 1,\dots,d$ and $r = |\vec{x}|$.
Substituting this ansatz into the YM equations \eqref{ym}
and letting $w(t,r) = \dfrac{1-\phi(t,r)}{r^2}$,
we obtain the semilinear wave equation
\begin{align} \label{ymd}
\phi_{tt}
  = \phi_{rr}
     + \frac{d-3}{r}\, \phi_r
     + \frac{d-2}{r^2}\,\phi(1-\phi^2).
\end{align}
The associated conserved energy is
\begin{align}
E
  &= \int_0^{\infty}
     \left(
       \phi_t^2 + \phi_r^2
       + \frac{d-2}{r^2} (1-\phi^2)^2
     \right)
     r^{d-3}\, dr.
\label{energy}
\end{align}
The basic question for Eq.~\eqref{ymd} is whether solutions starting
from smooth finite-energy initial data can develop singularities
(``blow up'') in finite time and, if so, how the blowup occurs.
A key feature relevant to this issue is the scale invariance
of Eq.~\eqref{ymd} under the transformation
\begin{align}
\phi(t,r)
  &\mapsto \phi_{\lambda}(t,r)
    := \phi\!\left(\frac{t}{\lambda}, \frac{r}{\lambda}\right),
\label{ssscale}
\end{align}
where $\lambda > 0$ is a constant.
Under the scaling, the energy transforms as
\begin{align}
E(\phi_\lambda)
  &= \lambda^{d-4} E(\phi),
\end{align}
which implies that Eq.~\eqref{ymd} is energy-subcritical for $d=3$,
energy--critical for $d=4$, and energy--supercritical for $d\ge 5$.

In the subcritical case $d=3$, concentration to small scales
is energetically penalized, in the sense that rescaling a localized
configuration to smaller length scales increases the energy.
This heuristic is supported by rigorous proofs of global regularity
for smooth finite-energy data \cite{EM,KM}.

In the critical case $d=4$, the energy is invariant under scaling,
so it does not favour dispersion or concentration.
Numerical \cite{BT} and analytical \cite{BOS,RR} results show that
blowup proceeds via concentration of a suitably rescaled instanton;
the scale parameter shrinks to zero while the profile converges
(after appropriate modulation) to the static instanton.

In the supercritical case $d \ge 5$, concentration to small scales
is energetically cheap, so the dynamics favour the formation of
increasingly localized structures and one expects blowup for large
classes of data.
This expectation is borne out by constructions of self-similar
solutions, i.e. solutions of the form
\begin{align} \label{ss}
\phi(t,r) = u(y),\quad
y = \frac{r}{T-t},
\end{align}
where $T$ is a positive constant.
Substituting this ansatz into Eq.~\eqref{ymd} one obtains the
ordinary differential equation
\begin{align}
y^{2} (1-y^{2}) u''
  + \left[(d-3) y - 2y^{3}\right] u'
  + (d-2) u (1-u^2) &= 0\,.
\label{ode}
\end{align}
If the profile function $u(y)$ is smooth on the interval
$0 \leq y \leq 1$ (which corresponds to the interior of the past
light cone of the point $(t=T,r=0)$), then the associated
self-similar solution develops a singularity at time~$T$
from regular initial data.
A key role is played by the explicit self-similar solution
\begin{align}
u_{+}(y)
  &= 1 - \frac{\alpha_{+} y^{2}}{y^{2}+\beta_{+}},
\label{exact+}
\end{align}
where
\begin{align}
\alpha_{+}
  &= 2 + 2\sqrt{\frac{d-4}{3(d-2)}},\nonumber\\
\beta_{+}
  &= \frac{2}{3}(d-4)
     + \frac{1}{3}\sqrt{3(d-2)(d-4)}\nonumber\,.
\end{align}
This solution is regular and nontrivial for all $d\geq 5$;
it was first found for $d=5$ in~\cite{BT} and later generalized
to all $d\geq 5$ in~\cite{BB}.
Its nonlinear stability was proved by the second author \cite{G2,G3},
building on earlier work by the first author~\cite{B2} and by
Donninger et al.~\cite{Do,CDGH}
(see also recent extensions of this result beyond the past light
cone \cite{DO1,DO2}).
These stability results constitute an important step towards showing
that $u_{+}$ governs generic spherically-symmetric blowup, as
conjectured in \cite{B} and supported by numerical computations
\cite{BT,BB}.

Apart from the explicit solution $u_{+}$, other solutions of Eq.~\eqref{ode} have, to the best of our knowledge, been studied only in the low supercritical odd dimensions $5,7,9$. In \cite{B} it was shown, by means of a shooting argument, that there exists a countable family of self-similar solutions $u_{n}(y)$, indexed by their nodal number $n=0,1,\dots$ (the proof was given for $d=5$, but it can be repeated with minor modifications for $d=7$ and $9$). 
 These solutions satisfy $u_n(1)=0$. For $d=5$, $u_0$ coincides with $u_{+}$; however for $d=7$ and $9$, $u_{+}(1)\neq 0$, so $u_{+}$ does not belong to the nodal family $u_n$.
 Earlier, an alternative variational construction of the solution $u_{0}$ in dimensions $d=5,7,9$ was given in \cite{CSS}.  Numerical analysis in $d=5$ showed that the solutions $u_{n}$  have exactly $n$ unstable modes in the associated linearised problem~\cite{B} and  that the solution $u_{1}$ plays the role of a critical solution whose codimension-one stable manifold separates blowup from dispersion \cite{BT}.
This study revealed striking analogies between Yang--Mills theory
in $d=5$ and the Einstein--massless scalar system in the physical
dimension $d=3$, in particular with regard to critical phenomena
at the threshold of black hole formation.

We are not aware of any studies of solutions of Eq.~\eqref{ode} in higher dimensions.
In this paper we begin to fill this gap in the case of odd dimensions $d\geq 11$. The case of even dimensions requires different methods, as will become clear in the course of the analysis below.

Beyond its purely mathematical interest and potential relevance
in string-inspired contexts, our study of high-dimensional
self-similar solutions is motivated by the idea that critical
phenomena at the threshold of blowup may simplify as the dimension
increases.
This expectation is reinforced by the remarkable rigidity of
self-similar solutions revealed in this paper.
Conceptually, this is related to work on critical solutions in
the large-$D$ limit of gravity by Emparan and collaborators~\cite{EH},
although the underlying equations and physical interpretations
differ.

The rest of the paper is organized as follows.
Section~\ref{sec:local} is devoted to the analysis of local
solutions of Eq.~\eqref{ode} near the singular endpoints $y=0$
and $y=1$.
Building on the analysis of Cazenave, Shatah, and Tahvildar-Zadeh
\cite{CSS}, we show that a nontrivial local solution is smooth at
$y=1$ if and only if $z=u(1)^{2}$ is a zero of an explicit polynomial
$P_{m}(z)$ of degree $m=(d-5)/2$.
In Section~\ref{sec:shooting} we establish global bounds and
monotonicity properties of the shooting map, and thereby prove
our main result: the number of nontrivial global smooth profiles
equals the number $N$ of zeros of the polynomial $P_{m}(z)$ in the
interval $0<z<1$.
Explicit computations show that $N=3$ for $m=3,\dots,15$, and we
conjecture that this remains true for all $m\ge 3$, with heuristic
evidence presented in the appendix.
The corresponding three smooth solutions consist of the known
explicit profile $u_{+}$ and two new ones, denoted $u_{-}$ and
$u_{\ast}$.
We find $u_{-}$ in closed form, in close analogy to $u_{+}$, and
construct $u_{\ast}$ numerically.
The extension of these solutions outside the past light cone is
discussed in Section~\ref{sec:outside}.
Finally, in Section~\ref{sec:outlook} we mention the ongoing work
on the stability of the solutions $u_{-}$ and $u_{\ast}$ and their
expected dynamical role.

\section{Local solutions near the origin and the past light cone}
\label{sec:local}
We seek regular solutions of Eq.~\eqref{ode} on the interval
$0\leq y\leq 1$.
The first step in solving this nonlinear boundary-value problem
is to analyze the behavior of solutions near the singular endpoints
$y=0$ and $y=1$.
By reflection symmetry $u \mapsto -u$, it suffices to consider
solutions with $u(0)\geq 0$.
Near $y=0$ it is routine to show that regular solutions behave as
\be\label{taylor0}
u(y) = 1 - a y^{2} + \mathcal{O}(y^{4}),
\ee
 where $a$ is a free
parameter.
These local solutions, denoted below by $u(a,y)$, are analytic in
$a$ and $y$ near $y=0$.

The behavior of regular solutions near $y=1$ is more subtle and
depends on the parity of $d$, as was first observed by Cazenave,
Shatah, and Tahvildar-Zadeh (see Proposition~1 in \cite{CSS}).
The key point of their above analysis is that, in odd dimensions,
smoothness at $y=1$ is not automatic but imposes an algebraic
restriction on the local data.
Near $y=1$ one looks for a local solution in the form
\be\label{taylor}
u(y)
  = c + \sum_{n=1}^{\infty} c_n (y-1)^n,
\ee
with $c=u(1)$ as the leading coefficient.
Substituting this ansatz into Eq.~\eqref{ode} and collecting
coefficients of $(y-1)^n$ yields an infinite algebraic system
in the triangular form
%\begin{subequations}\label{cn}
\begin{align}\label{triang}
(d-5)c_1 + (d-2)c(1-c^{2}) &= 0,\nonumber\\
(d-7)c_2
  + \tfrac12\bigl(2d-11-3(d-2)c^{2}\bigr)c_1 &= 0,\nonumber\\
(d-9)c_3
  + \bigl(d-10-(d-2)c^{2}\bigr)c_2
  - 2c_1 - (d-2)c_1^{2}c &= 0,\nonumber\\
&\vdots \nonumber
\end{align}
%\end{subequations}
For $d=2m+5$ ($m=0,1,\dots$) the factor multiplying $c_{m+1}$
in the triangular system vanishes, so $c_{m+1}$ is not fixed by
the recursion.
This resonance at order $m+1$ leaves $c_{m+1}$ free, while the
coefficients $c,c_1,\dots,c_m$ satisfy a closed algebraic system
of $m+1$ equations.
Solving this system recursively, we find that either
$c^{2}(1-c^{2})=0$ (and then $c_n=0$ for all $n$ from $1$ to $m$)
or $c^{2}$ is a positive root of a polynomial $P_m(c^{2})$ of
degree $m$.
The first five polynomials, unique up to normalization, are:
\begin{subequations}\label{P}
\begin{align}
P_1(c^{2}) &= 5c^{2}-1,\\
P_2(c^{2}) &= 196c^{4}-77c^{2}+1,\\
P_3(c^{2})
  &= (225c^{4}-114c^{2}+1)(21c^{2}-1),\label{P3}\\
P_4(c^{2})
  &= (121c^{4}-77c^{2}+4)(11c^{2}-1)(77c^{2}+3),\label{P4}\\
P_5(c^{2})
  &= (8281c^{4}-6266c^{2}+625)\nonumber\\
  &\quad\times(285837c^{6}-11661c^{4}-1989c^{2}-187).  
 \label{P5}
\end{align}
\end{subequations}
Thus, by a purely local analysis near $y=1$ we infer that in odd
dimensions smooth solutions can exist only for isolated values
of $u(1)$.
In particular, the requirement that $c^2$ be a root of $P_m$
expresses smoothness at $y=1$ as a codimension-one condition on
the local data.
Moreover, if $u(1)$ takes one of those admissible values, then
the solution is smooth at $y=1$.
This follows from the fact that, in general, local solutions near
$y=1$ exhibit a polylogarithmic Fuchsian expansion with a resonant
logarithmic term at order $m+1$ (see \cite{K} for an introduction
to Fuchsian methods)

\begin{align} \label{fuchs}
u(y)
  &= c + \sum_{n=1}^{m} c_n (y-1)^n
     + b (y-1)^{m+1} \log(1-y)\nonumber\\
  &\quad + c_{m+1} (y-1)^{m+1} + \dots.
\end{align}
Generically, the coefficient $b$ is nonzero, so the solution is
only $\mathcal{C}^{m}$ at $y=1$.
However, if $u(1)$ satisfies the above algebraic condition for
smoothness, that is, if $c^2$ is a root of $P_m(c^2)$, then the
resonance is cancelled, i.e. $b=0$.
In this case the Fuchsian expansion \eqref{fuchs} reduces to the
Taylor series \eqref{taylor} and the solution is smooth at $y=1$
(see \cite{CSS} for an alternative proof).
\begin{rem}
If $d$ is even, then the above triangular system  can be solved
recursively for any given $c$, so all values of $u(1)$ are
\emph{a priori} admissible.
For this reason the shooting argument given below for odd $d$
does not work for even~$d$.
\end{rem}
\section{Shooting argument}\label{sec:shooting}
In the following, we consider only solutions which are
regular at $y=0$.
To simplify notation, we write $u(y)$ instead of $u(a,y)$ whenever
the dependence on $a$ is not essential. If $a<0$, then the solution $u(a,y)$ is monotonically increasing (actually, it diverges  before reaching $y=1$), so henceforth we assume that $a\geq 0$.

First, we show that solutions remain bounded on the entire
interval $[0,1]$.
This can be proven using the Lyapunov functional
\begin{align}\label{H}
H(y)
  &= \frac{1}{2} y^{2} (1-y^{2}) u'(y)^2
     - \frac{d-2}{4}\, (1-u(y)^2)^2,
\end{align}
which satisfies
\begin{align}
\frac{dH}{dy}
  &= -(d-4) y\, u'(y)^2.
\end{align}
Since $H(0)=0$ for regular solutions, it follows that $H(y)<0$,
which implies that $|u(y)|<1$ and $H > -\frac{d-2}{4}$, hence
\[
y\sqrt{1-y^{2}}\, |u'|
  < \frac{\sqrt{d-2}}{2}.
\]
It follows that $u'(y)$ and $u(y)$ have finite limits at $y=1$.

\begin{lem}
If $d\geq 10$, then $u(y)$ is monotone decreasing from $u(0)=1$
to $u(1)>0$.
\end{lem}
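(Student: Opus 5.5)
The argument has two steps: a local argument reduces monotonicity to positivity of $u$, and a Sturm comparison with an explicit power of $y$ proves positivity. The dimension restriction $d\geq 10$ enters only in the comparison step.

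\emph{Step 1: reduction to positivity.} Suppose $u(y)>0$ on $[0,1)$. I claim $u'<0$ on $(0,1)$. From \eqref{taylor0} with $a>0$ (if $a=0$ then $u\equiv1$), $u'<0$ near $0$. Let $y_0\in(0,1)$ be the first zero of $u'$. At $y_0$, Eq.~\eqref{ode} gives
\[
y_0^2(1-y_0^2)\,u''(y_0) = -(d-2)\,u(1-u^2)\big|_{y_0} < 0,
\]
because $0<u<1$ there (recall $|u|<1$ from the bound $H<0$). So $y_0$ is a strict local maximum of $u$. This is impossible, since $u$ is strictly decreasing on $(0,y_0)$. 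Hence $u'$ can vanish only where $u\le0$, and it remains to show $u>0$ on $[0,1]$, including $u(1)>0$.

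\emph{Step 2: positivity by comparison.} Write Eq.~\eqref{ode} in self-adjoint form. With the weight $p(y)=y^{d-3}(1-y^2)^{(d-4)/2}$, the operator $L v:=y^2(1-y^2)v''+[(d-3)y-2y^3]v'$ satisfies $y^2(1-y^2)(p v')' = p\,Lv$. The equation then becomes
\[
(p u')' + \frac{(d-2)\,p}{y^2(1-y^2)}\,q(y)\,u = 0,\qquad q=1-u^2\in(0,1].
\]
Take the comparison function $v=y^{r}$, where $r$ is a root of the indicial polynomial $r^2+(d-4)r+(d-2)=0$ of the linearization at $u=0$. The roots are real exactly when $(d-4)^2-4(d-2)=d^2-12d+24\ge 0$, i.e.\ for $d\ge 10$; this is where the hypothesis is used. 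For $d=10$ the roots are $r=-2$ and $r=-4$.

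A direct computation gives
\[
Lv+(d-2)v = -r(r+1)\,y^{r+2}.
\]
This is negative because both roots satisfy $r<-1$. Consequently $v>0$ is a supersolution of the comparison equation with $q\equiv 1$, and hence also of the equation with any $q\le 1$.

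Now suppose $u$ has a first zero $y_1\in(0,1]$, so $u>0$ on $[0,y_1)$. Integrate the Picone/Wronskian identity
\[
\bigl(p(u'v-uv')\bigr)' = \frac{p}{y^2(1-y^2)}\,u\,\bigl[r(r+1)y^{r+2}+(d-2)(1-q)\,v\bigr]
\]
over $(\varepsilon,y_1)$. The right-hand side is strictly positive on $(0,y_1)$, so the integral is strictly positive.

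The boundary terms go the other way:
\begin{itemize}[leftmargin=*]
\item At $y=\varepsilon$: $u$ is regular and $v\sim y^r$, so $p(u'v-uv')=O(y^{d-4+r})\to0$. This holds because the larger root satisfies $r>-(d-4)$; taking either root works, since both exceed $-(d-4)$.
\item At $y_1<1$: the term equals $p(y_1)u'(y_1)v(y_1)\le 0$.
\item At $y_1=1$: $p\to0$ while $u'$ and $u$ have finite limits (shown above from the bound on $H$), so the term is $0$.
\end{itemize}
Either way the left-hand side is $\le 0$, a contradiction. Hence $u>0$ on $[0,1]$, and in particular $u(1)>0$. Together with Step 1, $u$ decreases monotonically from $u(0)=1$ to $u(1)>0$.

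\emph{Main obstacle.} The delicate point is the bookkeeping in Step 2: getting the sign of the Picone identity right, and checking the two endpoint limits. At $y=0$ this uses the size of $r$. At $y=1$ it uses the degeneracy of the weight $p$ together with the finiteness of $u'(1)$. If the sign of $r(r+1)$ or the boundary term at $0$ misbehaves for one root, I would switch to the other indicial root, or perturb the exponent slightly into the interval between the two roots.
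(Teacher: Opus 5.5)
There is a concrete error in your Step~2, and it breaks exactly the case the lemma is about. The weight $p=y^{d-3}(1-y^2)^{(d-4)/2}$ does not put the operator in divergence form. The condition $y^2(1-y^2)p'=p\,[(d-3)y-2y^3]$ gives $p'/p=\frac{d-3}{y}+\frac{(d-5)y}{1-y^2}$, so the correct weight is $p(y)=y^{d-3}(1-y^2)^{-(d-5)/2}$. Your weight would correspond to the first-order coefficient $(d-3)y-(2d-7)y^3$. With the correct $p$, your Picone identity holds verbatim. Your treatment of the endpoint $y=\varepsilon$ (where still $p\sim y^{d-3}$) is fine, and so is your treatment of an interior first zero $y_1<1$. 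But now $p\to\infty$, not $0$, as $y\to1$. So in the case $y_1=1$ the boundary term $p(u'v-uv')$ has the form $\infty\cdot 0$, since by \eqref{odey1} $u(1)=0$ forces $u'(1)=0$. Mere finiteness of $u(1)$ and $u'(1)$ then gives no information. This is not a side case. The configuration $u>0$ on $[0,1)$ with $u(1)=0$ is precisely the alternative that must be ruled out to get $u(1)>0$; for $d=5$ it is realized by $u_+=5(1-y^2)/(3y^2+5)$. Your Step~1 cannot rule it out either, since it only yields $u'<0$ on the open interval.

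The repair is short and needs no boundary term at $y_1$ at all:
\begin{itemize}
\item Integrate the identity from $0$, where $p(u'v-uv')\to0$. This gives $p(u'v-uv')>0$ on $(0,y_1)$.
\item Hence $(u/v)'=(u'v-uv')/v^2>0$ there, so $u/v$ is positive and increasing on $(0,y_1)$.
\item Therefore $u/v$ cannot tend to $0$ at $y_1$, whether $y_1<1$ or $y_1=1$. Only continuity of $u$ on $[0,1]$ is used.
\end{itemize}
With this change your proof is correct, and it takes a genuinely different route from the paper. The paper differentiates \eqref{ode} and runs a first-critical-point argument on $h=y^3u'$. The equation for $h$ has zeroth-order coefficient $-(3(d-2)u^2+d-10)\le 0$ for $d\ge 10$. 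Strict decrease of $h$ then gives $u'(1)<0$ directly, and \eqref{odey1} yields $u(1)>0$. So the endpoint never needs separate analysis, and the same device also proves Lemma~2. Your route proves positivity first by Sturm comparison with $y^r$ and then deduces monotonicity. It is somewhat longer, but it explains the dimensional threshold conceptually. It works precisely when the indicial roots of the linearization at $u=0$ are real, i.e.\ $d\ge 6+2\sqrt{3}$. This is essentially the threshold of Remark~3, and the non-oscillation counterpart of the mechanism behind Remark~2.
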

\begin{proof}
Let $h(y) = y^{3} u'(y)$.
Differentiating Eq.~\eqref{ode}, we obtain
\begin{align}\label{eqh}
y^{2} (1-y^{2}) h''
  + (d-7)y h'
  - \left(3(d-2) u(y)^2 + d-10 \right) h &= 0.
\end{align}
Since $h(y)\sim -2 a y^{4}$ near $y=0$, it is negative and
decreasing for small $y$.
Suppose, for contradiction, that $y_0<1$ is the first point where
$h'(y_0)=0$.
Evaluating Eq.~\eqref{eqh} at $y=y_0$ we get
\begin{align}
y_0^2 (1-y_0^2) h''(y_0)
  &= \left(3(d-2) u(y_0)^{2} + d-10 \right) h(y_0) < 0,\nonumber
\end{align}
which contradicts the assumption that such a point $y_0$ exists.
Thus, $h(y)<0$ and hence $u'(y)<0$ on $(0,1]$.
Evaluating Eq.~\eqref{ode} at~$y=1$, we obtain
\begin{align}\label{odey1}
(d-5) u'(1) + (d-2) u(1) (1-u(1)^{2}) &= 0\,.
\end{align}
Since $u'(1)\!<\!0$ and $u(1)^{2}\!<\!1$, it follows that $u(1)\!>\!0$.
\end{proof}
It follows from Lemma~1 that a general solution $u(a,y)$ admits
the Fuchsian expansion \eqref{fuchs} at $y=1$ with $c>0$.
This defines a $\mathcal{C}^{m-1}$ map $a \mapsto c(a)$ from
$(0,\infty)$ to $(0,1]$.
\begin{lem}
If $d\geq 10$, then the function $c(a)$ is monotone decreasing
from $c(0)=1$ to $c(\infty)=0$.
\end{lem}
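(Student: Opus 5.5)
We need three things: $c(0)=1$, monotonicity of $c(a)$, and $c(a)\to 0$ as $a\to\infty$. The endpoint $a=0$ is immediate, since $u(0,y)\equiv 1$ is the regular solution with $a=0$, so $c(0)=1$.

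For monotonicity, differentiate with respect to the shooting parameter. Set $v(y)=\partial_a u(a,y)$. Then $v$ solves the linearized equation
\begin{align*}
y^{2}(1-y^{2})v''+\bigl[(d-3)y-2y^{3}\bigr]v'+(d-2)(1-3u^{2})v=0,
\end{align*}
with $v(y)\sim -y^{2}$ near $y=0$. The aim is to show $v<0$ on $(0,1]$, which gives $c'(a)=v(1)<0$.

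The natural comparison function is $h=y^{3}u'$ from Lemma~1. By scaling, $y u'(y)$ is the generator of the dilation $y\mapsto \lambda y$, but that dilation is not a symmetry of Eq.~\eqref{ode} because of the factor $(1-y^2)$. So instead I would use a Sturm/Wronskian comparison between $v$ and $h$, or between $v$ and $w:=yu'$. A direct computation gives the equation for $w$: it is the linearized operator applied to $w$ plus an inhomogeneous term proportional to $y^{2}u''\cdot y^2$ type terms. Concretely, $L[yu']=2y^{2}\cdot(\text{something in } u'' , u')$, and this can be rewritten using Eq.~\eqref{ode}.

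The plan is then:
\begin{enumerate}[label=(\roman*)]
\item Compute $L[w]$ explicitly and show it has a definite sign on $(0,1)$, using $u'<0$, $0<u<1$ and the ODE.
\item Note that $w\sim -2ay^{2}$ and $v\sim -y^{2}$ near $0$. Compare them via the Wronskian $W=y^{d-3}(1-y^2)^{\ast}(v w'-v'w)$, with the Sturm weight chosen from the self-adjoint form of $L$, namely $(y^{d-3}(1-y^2)^{(d-5)/2}\,\cdot)'$.
\item Show that $v$ cannot have a first zero $y_1\in(0,1]$ before $w$ does. Since $w<0$ on $(0,1]$ by Lemma~1, this forces $v<0$.
\end{enumerate}
Alternatively, follow the pattern of Lemma~1 and apply the same maximum-principle argument to $y^{3}\partial_y$-type quantities. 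At $y=1$ the weight degenerates, and there I would use Eq.~\eqref{odey1} differentiated in $a$ to handle $v(1)=0$. It gives $(d-5)v'(1)+(d-2)(1-3c^2)v(1)=0$, so $v(1)=0$ would force $v'(1)=0$, and by the Fuchsian uniqueness $v\equiv0$, which is a contradiction.

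For the limit $c(\infty)=0$, rescale with $y=x/\sqrt{a}$. As $a\to\infty$, $u(a,x/\sqrt a)\to U(x)$, where $U$ solves the static equation $U''+\frac{d-3}{x}U'+\frac{d-2}{x^{2}}U(1-U^{2})=0$ with $U=1-x^2+\dots$. For $d\ge 10$, linearizing at $U=0$ gives exponents $\rho$ with $\rho^2+(d-4)\rho+(d-2)=0$, whose discriminant $(d-4)^2-4(d-2)=d^2-12d+24$ is positive for $d\ge 10$. So $U$ decreases monotonically to $0$ without oscillation, and $U(x)\to 0$ as $x\to\infty$; it must approach $0$ rather than $\pm1$ by the energy/Lyapunov identity. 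Hence for large $a$, $u$ is small already at some small $y_a$. Then use the Lyapunov bound and monotonicity from Lemma~1 to show $u$ stays small on $[y_a,1]$: $u$ is decreasing and positive, so $0<c(a)<u(y_a)\to0$.

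The main obstacle is step (i)--(iii), the sign of $v$. The inhomogeneous term may not have a definite sign, and I expect to need the specific structure $d\ge10$ (as in the coefficient $d-10$ in Eq.~\eqref{eqh}) to close the argument. My first attempt would be to compare $v$ directly with $h$ through a combination $v-\lambda h$ and a first-touching-point maximum principle.
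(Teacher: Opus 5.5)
Your handling of the endpoints is fine and matches the paper. $c(0)=1$ because $u(0,y)\equiv 1$. For $c(\infty)=0$ the paper likewise rescales ($e^\tau=\sqrt a\,y$) to $U''+(d-4)U'+(d-2)U(1-U^2)=0$ and uses the phase plane. Your use of Lemma~1 to get $0<c(a)<u(a,y_a)$ is a useful extra detail.

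\textbf{The gap is the monotonicity}, which is the actual content of the lemma, and your proposal leaves it open.
\begin{itemize}
\item Step (i) cannot work. Differentiating the dilation family gives exactly $L[yu']=2y^{2}\,(y^{2}u')'$, and this has no fixed sign on $(0,1)$. Take the explicit solution $u_-$, which is $u(a,\cdot)$ with $a=\alpha_-/\beta_-$. Then $(y^2u_-')'=-2\alpha_-\beta_-\,y^{2}(3\beta_--y^{2})/(y^{2}+\beta_-)^{3}$, which changes sign at $y^{2}=3\beta_-\approx 0.25$ when $d=11$. So the Wronskian comparison has nothing to work with.
\item The Lemma~1 argument applied directly to $\partial_a u$ also breaks down. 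The potential $(d-2)(1-3u^2)$ changes sign once $u<1/\sqrt{3}$, which happens for large $a$.
\item Your treatment of $y=1$ is wrong. The linearized equation has indicial exponents $0$ and $m+1$ there. So $v(1)=0$ only kills the Taylor coefficients up to order $m$ and leaves a nontrivial solution behaving like $(y-1)^{m+1}$; it does not force $v\equiv 0$.
\end{itemize}

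The missing idea is a weight, exactly as with $h=y^3u'$ in Lemma~1: take $v=y^{2}\partial_a u$.
\begin{itemize}
\item $v$ satisfies Eq.~\eqref{eqv}, whose zeroth-order coefficient $-(3(d-2)u^{2}+2y^{2}+d-10)$ is strictly negative on $(0,1)$ for $d\ge 10$.
\item Since $v\sim -y^{4}$, $v$ is negative and decreasing near $0$.
\item At a first critical point $y_0\in(0,1)$ the equation gives $y_0^{2}(1-y_0^{2})v''(y_0)=(3(d-2)u^{2}+2y_0^{2}+d-10)\,v(y_0)<0$. This contradicts $v''(y_0)\ge 0$.
\item Hence $v'<0$ on $(0,1)$, and $c'(a)=v(1)<0$.
\end{itemize}
This needs no comparison with $u'$ and no separate argument at $y=1$.

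More generally, a weight $y^{\kappa}$ shifts the constant part of the potential to $\kappa^{2}+(4-d)\kappa+(d-2)$. This can be made nonpositive iff $d^{2}-12d+24\ge 0$, which is the same quadratic you found in your limit analysis. The choice $\kappa=2$ gives $10-d$.
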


\begin{proof}
Obviously $c(0)=1$.
To analyze the limit $a\ra\infty$, we introduce the rescaled
variable $e^{\tau}=\sqrt{a}\, y$.
Substituting $u(y)=U(\tau)$ into Eq.~\eqref{ode} and taking the
limit $a\ra\infty$, we obtain the autonomous equation
\begin{align}\label{odelimit}
U'' + (d-4) U' + (d-2) U (1-U^2)&= 0\,.
\end{align}
By an elementary phase plane analysis we find that solutions
starting at $(U,U') = (1,0)$ at $\tau=-\infty$ tend monotonically
to $(0,0)$ for $\tau\ra \infty$.
This proves that $c(\infty)=0$.

Next, let $v(y) = y^{2} \dfrac{\partial u(a,y)}{\partial a}$.
Differentiating Eq.~\eqref{ode} with respect to $a$, we obtain
\begin{align}\label{eqv}
&y^{2} (1-y^{2}) v''
  + \left((d-7)y+2y^{3}\right) v' \nonumber\\
  &- \left(3(d-2) u(y)^2 + 2 y^2 + d-10 \right) v = 0.
\end{align}
Since $v(y)\sim -y^{4}$ near $y=0$, it is negative and decreasing
for small $y$.
By an argument analogous to that used in the proof of Lemma~1,
we conclude that $v(y)<0$ on $(0,1]$, hence $c(a)$ is monotone
decreasing.
\end{proof}
\begin{rem}
For $5\leq d\leq 9$, the assertions of Lemmas~1 and~2 are false
because the solutions $u(a,y)$ can oscillate around zero, and the
function $c(a)$ is not monotone (see \cite{B} for the proof of
existence of infinitely many smooth oscillating solutions in
$d=5$).
\end{rem}
\begin{rem}
If $d$ in Eq.~\eqref{ode} were treated as a parameter rather than
geometric dimension, the lower bound in Lemmas~1 and~2 could be
strengthened to $d > 6+2\sqrt{3}\approx 9.4641$ by repeating the
proofs with the functions $h(y) = y^{2+\sqrt{3}} u'(y)$ and
$v(y) = y^{1+\sqrt{3}} \frac{\partial u}{\partial a}(a, y)$.
\end{rem}

Combining Lemmas~1 and~2 with the analysis in
Section~\ref{sec:local} of the behavior of smooth solutions at
$y=1$ for odd $d$, we obtain the following rigidity result.
\begin{theorem*}\label{thm:rigidity}
Let $d = 2m+5$ for integer $m\geq 3$.
Then, up to the symmetry $u \mapsto -u$, the number of nontrivial
smooth solutions of Eq.~\eqref{ode} on the interval
$0\leq y\leq 1$ is equal to the number of zeros of the polynomial
$P_m(c^{2})$ in the interval $0 < c^{2} < 1$.
\end{theorem*}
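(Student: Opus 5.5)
We argue by putting the smooth solutions in bijection with the admissible values of $c=u(1)$, using the shooting parameter $a$ as the intermediary. Throughout, assume $u(0)\ge 0$, which is allowed by the symmetry $u\mapsto -u$.

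\textbf{Step 1: reduction to the one-parameter family.} Let $u$ be a smooth solution of Eq.~\eqref{ode} on $[0,1]$. Regularity at $y=0$ together with the expansion \eqref{taylor0} forces $u(0)\in\{0,\pm1\}$.

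If $u(0)=0$, then uniqueness for the regular singular point at $y=0$ should give $u\equiv 0$. To see this, note that the linearization at $0$ has indicial exponents solving $s(s-1)+(d-3)s+(d-2)=0$, and both roots have negative real part. So the only regular solution with $u(0)=0$ is the trivial one.

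Thus a nontrivial smooth $u$ equals $u(a,y)$ for some $a$. Here $a<0$ is excluded because those solutions blow up before $y=1$, and $a=0$ gives the trivial solution $u\equiv 1$. So nontrivial smooth solutions correspond to $a>0$.

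\textbf{Step 2: the injective shooting map.} By Lemma~1, each such $u(a,\cdot)$ is positive and decreasing, with $c(a)=u(1)\in(0,1)$. By Lemma~2, the map $a\mapsto c(a)$ is continuous and strictly decreasing from $c(0)=1$ to $c(\infty)=0$. Hence it is a bijection from $(0,\infty)$ onto $(0,1)$.

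I would make sure that strict monotonicity really follows from $v<0$. Indeed, $\partial_a c(a)=v(1)$, and the sign argument gives $v(1)\le 0$. It gives $v<0$ on $(0,1)$, and injectivity only needs strict monotonicity, which holds as long as $v$ is not identically zero on $(0,1)$. So the conclusion is safe.

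\textbf{Step 3: smoothness at $y=1$ is equivalent to $P_m(c^2)=0$.} By Section~\ref{sec:local}, $u(a,\cdot)$ is smooth at $y=1$ if and only if the log coefficient $b$ in \eqref{fuchs} vanishes. By the triangular system, this happens if and only if either $c^2(1-c^2)=0$ or $P_m(c^2)=0$.

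Since $0<c<1$, the first alternative is excluded. Hence $u(a,\cdot)$ is smooth on $[0,1]$ exactly when $c(a)^2$ is a root of $P_m$ in $(0,1)$. Smoothness in the interior is automatic, because the equation is regular on $(0,1)$.

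\textbf{Step 4: counting.} Composing the bijection $a\mapsto c(a)$ with $c\mapsto c^2$, which is a bijection on $(0,1)$, identifies the nontrivial smooth solutions with $u(0)>0$ with the zeros of $P_m$ in $(0,1)$. Multiplicity of zeros is irrelevant, since we count distinct values. The reflection $u\mapsto -u$ accounts for the remaining solutions.

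\textbf{Main obstacle.} The main obstacle is the local equivalence in Step 3, i.e.\ that $b=0$ exactly when $c^2$ solves the closed system for $c,c_1,\dots,c_m$. Concretely, I would show that $b$ is a nonzero multiple of the compatibility condition at the resonant order $m+1$, namely the coefficient equation there whose leading factor vanishes. I would then show that, after eliminating $c_1,\dots,c_m$ recursively, this condition equals $c(1-c^2)\,P_m(c^2)$ up to a nonzero factor. This is the content of Section~\ref{sec:local}, which we take as given.

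I would also check that the hypothesis $m\ge3$ (so $d\ge 11\ge 10$) is exactly what is needed to invoke Lemmas~1 and~2.
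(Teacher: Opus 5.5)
Your proposal is correct and follows the paper's argument. Lemmas~1 and~2 make $a\mapsto c(a)$ a bijection $(0,\infty)\to(0,1)$, the resonance analysis at $y=1$ makes smoothness equivalent to $P_m(c^2)=0$, and your indicial-exponent argument ruling out $u(0)=0$ fills in a step the paper leaves implicit.

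The one slip is your self-check in Step~2. Knowing $v<0$ (or $v\not\equiv 0$) on the open interval $(0,1)$ says nothing about $c'(a)=v(a,1)$, so it does not give strict monotonicity. The correct reason is that the Lemma~1-type argument actually yields $v'<0$ on $(0,1)$, hence $v(a,1)\le v(a,\tfrac12)<0$.
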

Thus, the nonlinear boundary value problem is transformed into a
finite-dimensional root-counting problem.
This result gives not only a classification theorem, but also a
concrete computational framework for constructing self-similar
solutions.

Let $N$ denote the number of zeroes of the polynomial $P_m(c^2)$.
From the expressions \eqref{P3}--\eqref{P5}, we see explicitly
that $N=3$ for $m=3,4,5$.
By extensive computations we extended this result up to $m=15$,
the upper bound being merely limited by the extent of our
computations.
In the appendix we give a heuristic argument indicating that
$N=3$ for all odd $d\ge 11$; a rigorous proof of this conjecture
remains an open problem.

\vskip 0.1cm
Two of these solutions admit closed form expressions.
The solution $u_{+}$ was given above in \eqref{exact+}.
We find that the second solution has a similar form
\begin{align}
u_{-}(y)
  &= 1 - \frac{\alpha_{-} y^{2}}{y^{2}+\beta_{-}},
\label{exact-}
\end{align}
where
\begin{align}
\alpha_{-}
  &= 2 - 2\sqrt{\frac{d-4}{3(d-2)}},\nonumber\\
\beta_{-}
  &= \frac{2}{3}(d-4)
     - \frac{1}{3}\sqrt{3(d-2)(d-4)}\nonumber\,.
\end{align}
To the best of our knowledge, the solution $u_{-}$ has not
previously appeared in the literature.
It is nontrivial and regular for $d \ge 11$; for $d=10$ one has
$u_{-}=0$, whereas for $5 \le d \le 9$ the denominator
$y^{2} + \beta_{-}$ in \eqref{exact-} has a zero in $(0,1)$, so
$u_{-}$ is singular.

The third solution, denoted by $u_{\ast}$, can be easily
constructed numerically using a shooting method by integrating
local regular solutions $u(a,y)$ from $y=0$ toward $y=1$ and
adjusting the parameter $a$ so that $u(a,1)$ attains the third
admissible value~$c_{*}$ for which $P_m(c_{\ast}^2)=0$.

\begin{figure}[h!]
  \includegraphics[width=0.48\linewidth,
                   trim={2cm 12.5cm 3cm 1.5cm},clip]{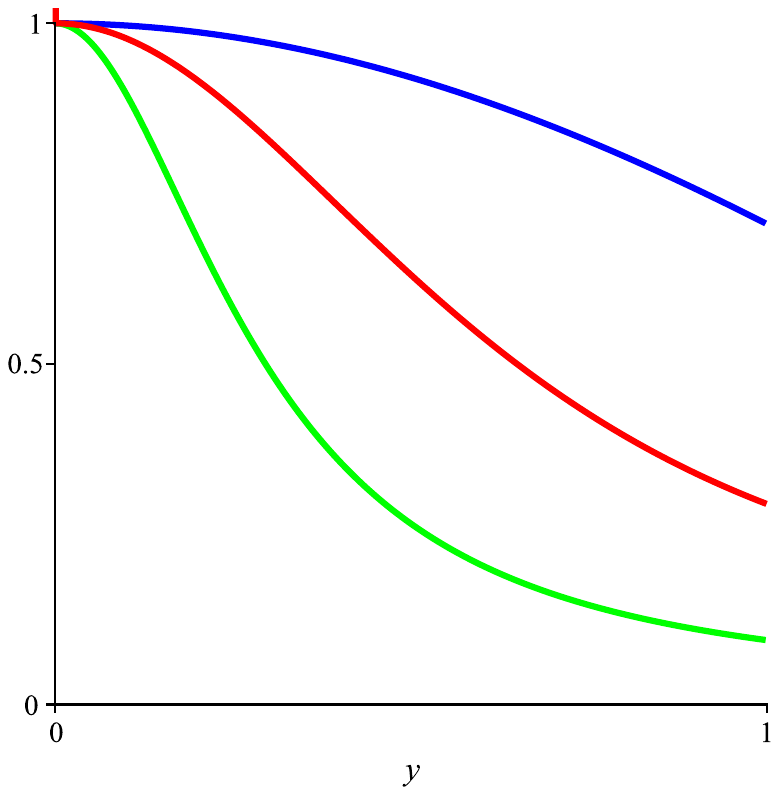}
  \includegraphics[width=0.48\linewidth,
                   trim={2cm 12.5cm 3cm 1.5cm},clip]{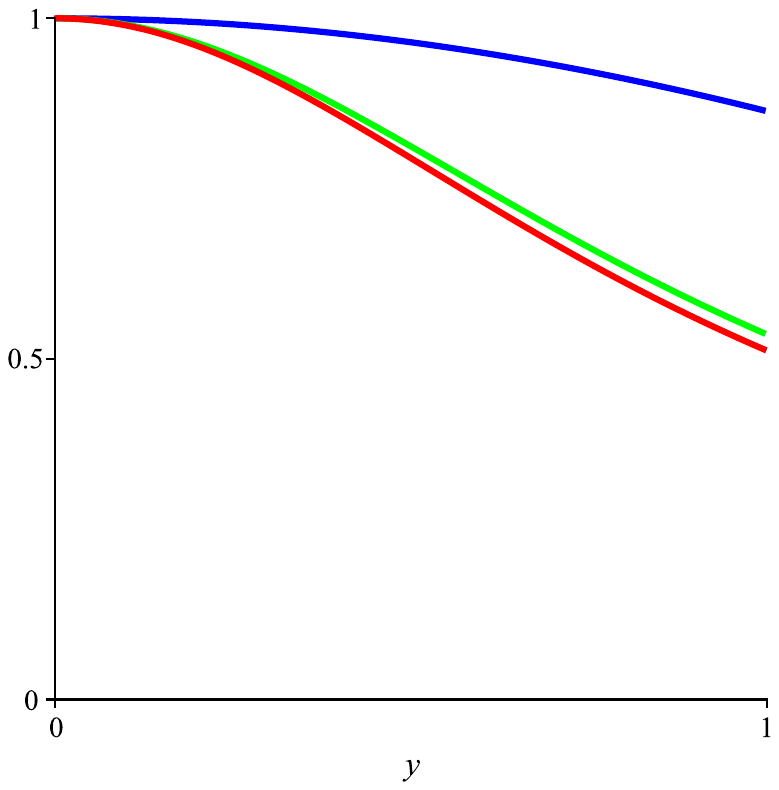}
  \caption{%
    Profiles of $u_{+}$ (blue), $u_{-}$ (green),
    and $u_{\ast}$ (red) in $d=11$ (left) and $d=21$ (right).%
  }
%  \label{fig:twopanels}
\end{figure}
\section{Behavior outside the past light cone}\label{sec:outside}
Let $x = 1/y$.
This coordinate covers the whole spacetime: the past and future
light cones of the point $(t=T,r=0)$ are located at $x=1$ and
$x=-1$ respectively, while the origin corresponds to $x=\infty$
if $t<T$ and $x=-\infty$ if $t>T$.
In terms of $x$ Eq.~\eqref{ode} takes the form
\begin{align}\label{eqx}
(1-x^2) u''(x)
  + (d-5) x u'(x)
  - (d-2) u(x) (1-u(x)^2) &= 0.
\end{align}
We showed above that this equation has three regular solutions in
the interval $1\leq x<\infty$, corresponding to the interior of
the past light cone.
The two explicit solutions $u_{\pm}(x)$ obviously extend smoothly
to the entire spacetime.
That the third solution $u_{\ast}(x)$ can be extended to
$x\in (-1,1)$ can be easily shown using the functional
\begin{align}\label{Q}
\tilde H(x)
  &= -H(1/x) \nonumber\\
  &= \frac{1}{2} (1-x^2) u'(x)^2
     + \frac{d-2}{4} (1-u(x)^2)^2,
\end{align}
which satisfies $\tilde H'(x) = -(d-4) x u'^2$.
For $x\ra -1^{+}$, in analogy to \eqref{fuchs}, we have
\begin{align}\label{fuchs2}
u_{\ast}(x)
  &= \tilde{c}
     + \sum_{n=1}^{m} \tilde c_n (x+1)^n
     + \tilde b (x+1)^{m+1} \log(x+1) \nonumber\\
  &\quad + \tilde c_{m+1} (x+1)^{m+1}+ \dots,
\end{align}
with a generically nonzero coefficient $\tilde b$, hence
$u_{\ast}(x)$ is only $\mathcal{C}^{m}$ at $x=-1$.
\section{Outlook}\label{sec:outlook}
Each smooth self-similar solution of Eq.~\eqref{ymd} provides an
example of finite-time singularity formation from
smooth initial data.
The extent to which a given self-similar solution participates in
dynamics depends on its stability.

As discussed in the introduction, the solution $u_{+}$ has been
proven to be stable and has been shown numerically to govern
generic blowup.
The stability properties of the solutions $u_{-}$ and $u_{\ast}$
have not yet been established.
Our ongoing work indicates that the solution $u_{-}$ has two
unstable modes for $11 \le d \le 17$ and one for $d \ge 19$,
whereas the solution $u_{\ast}$ exhibits the opposite behaviour,
with one unstable mode for $11 \le d \le 17$ and two for
$d \ge 19$.
The solution with a single unstable mode (either $u_{\ast}$ or
$u_{-}$, depending on the dimension) is expected to be critical,
in the sense that its codimension-one stable manifold separates
blowup from dispersion.
This is particularly interesting in the case of solution $u_{-}$,
since its explicit form opens the door to a rigorous analysis of
the threshold dynamics.
\vskip 0.1cm
Finally, we point out that for $d\geq 10$ one can construct
non-self-similar blowup solutions, in analogy with the
construction of such solutions for equivariant wave maps in
$d\geq 7$ \cite{GIN}.
For $d\geq 11$ these so-called type II blowup solutions have the
form $Q\left(r/\lambda(t)\right)$, where $Q(r)$ is a static
solution of Eq.~\eqref{ymd} and
$\lambda(t)\sim (T-t)^{1+\gamma}$.
The anomalous exponent $\gamma>0$ can be determined by the method
of matched asymptotics.
The coexistence of type I and II blowup scenarios for
$d\geq 10$ may lead to new blowup dynamics that are absent in
lower dimensions.
\vskip 0.1cm
\emph{Acknowledgements.}
The research of P.B. was supported in part by the Polish National
Science Center grant no.~2017/26/A/ST2/00530.
The research of I.G. was funded in whole or in part by the
Austrian Science Fund (FWF) 10.55776/PAT5825523.

\section*{Appendix}\label{sec:appendix}
To simplify the notation, we introduce the variable $z=c^2$.
In order to count the number of zeros of the polynomials $P_m(z)$
for $m\geq 3$, let us first factor out the two explicit zeros
corresponding to the solutions $u_{\pm}$,
\begin{align}\label{zpm}
z_{\pm}
  &= \left(1-\frac{\alpha_{\pm}}{\beta_{\pm}+1}\right)^2.
\end{align}
Accordingly, we write
\[
P_m(z) = (z-z_{-})(z-z_{+}) S_m(z),
\]
where
\begin{align}\label{Sm}
S_m(z)
  &= s_{m-2} z^{m-2}
     + s_{m-3} z^{m-3}
     + \cdots + s_0.
\end{align}
For any finite $m$, the coefficients $s_0,\dots,s_{m-2}$ can be
computed explicitly in an algorithmic manner.
By convention, we assume that the leading coefficient $s_{m-2}$
is positive.

The key observation, verified for all $3 \le m \le 40$, is that the descending coefficient sequence $s_{m-2}, s_{m-3}, \dots, s_0$ has exactly one sign change: a block of positive coefficients followed by a contiguous  block of negative coefficients~\cite{AS}. Moreover, the leading coefficient $s_{m-2}$ is large enough to ensure that $S_m(1)>0$, while  $s_0<0$ implies immediately that $S_m(0)<0$. Therefore, by Descartes' rule of signs $S_m(z)$ has exactly one positive real zero, which must lie in $(0,1)$; denote it by \(z_\ast\).

We find that $z_{-}<z_{\ast}<z_{+}$ for $3\leq m\leq 6$, while
$z_{\ast}<z_{-}<z_{+}$ for $m\geq 7$.
Notably, this ordering agrees with the ordering of the
corresponding numbers of unstable modes mentioned in the
previous section.

\end{document}